\newtheorem{thm}{Theorem}[section]
\newtheorem{lem}[thm]{Lemma}
\newtheorem{prop}[thm]{Proposition}
\newtheorem*{ques*}{Question}
\newtheorem*{example*}{Example}
\newtheorem*{porism*}{Porism}
\newtheorem*{scholium*}{Scholium}
\newtheorem*{thm*}{Theorem}
\newtheorem*{defin*}{Definition}
\newtheorem*{lem*}{Lemma}
\def\cN{{\mathcal N}}
\def\C{{\mathbb C}}
\def\R{{\mathbb R}}
\def\bR{{\mathbb R}}
\def\bZ{{\mathbb Z}}
\def\Sym{{\mathrm{Sym}}}
\def\del{{\partial}}
\def\im{{\text{im}}}
\begin{document}
\thispagestyle{empty}
\title[The rectangular peg problem]{The rectangular peg problem}
\author{Joshua Evan Greene} 
\address{Department of Mathematics, Boston College, USA}
\email{joshua.greene@bc.edu}
\urladdr{https://sites.google.com/bc.edu/joshua-e-greene}
\author{Andrew Lobb} 
\address{Mathematical Sciences,
	Durham University,
	UK}
\email{andrew.lobb@durham.ac.uk}
\urladdr{http://www.maths.dur.ac.uk/users/andrew.lobb/}
\thanks{AL thanks the Okinawa Institute of Science and Technology for hosting him as Excellence Chair while this work was completed.  JEG was supported on NSF CAREER Award DMS-1455132 and a Simons Fellowship.}

\begin{abstract}
For every smooth Jordan curve $\gamma$ and rectangle $R$ in the Euclidean plane, we show that there exists a rectangle similar to $R$ whose vertices lie on $\gamma$.
The proof relies on Shevchishin's theorem that the Klein bottle does not admit a smooth Lagrangian embedding in $\C^2$.
\end{abstract}

\maketitle

The result of this paper is the solution of the Rectangular Peg Problem for smooth Jordan curves:
\begin{thm*}
	\label{thm:main_theorem}
For every smooth Jordan curve $\gamma$ and rectangle $R$ in the Euclidean plane, there exists a rectangle similar to $R$ whose vertices lie on $\gamma$.
\end{thm*}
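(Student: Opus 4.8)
The plan is to reformulate the existence of an inscribed rectangle as an intersection problem for two Lagrangian tori in $\C^2$ and then use a Lagrangian surgery to manufacture, under the assumption that no such rectangle exists, an embedded Lagrangian Klein bottle --- contradicting Shevchishin's theorem. Identify the plane with $\C$, fix a rectangle $R$ of aspect ratio $\rho\in(0,\infty)$, and let $\theta=2\arctan\rho\in(0,\pi)$ be the angle between the diagonals of $R$. Work in $\C^2$ with coordinates $(z_1,z_2)$ and its standard symplectic form $\omega_{\mathrm{std}}$. Any product of two plane curves is Lagrangian, so $\gamma\times\gamma\subset\C\times\C=\C^2$ is a Lagrangian torus; since the $\C$-linear automorphism $(z_1,z_2)\mapsto(\tfrac{z_1+z_2}{2},\tfrac{z_1-z_2}{2})$ rescales $\omega_{\mathrm{std}}$ by a constant, the image
\[
L:=\Bigl\{\Bigl(\tfrac{p+q}{2},\tfrac{p-q}{2}\Bigr):p,q\in\gamma\Bigr\}
\]
is a smoothly embedded Lagrangian torus, containing the circle $C:=\gamma\times\{0\}$ that records the degenerate chords $p=q$. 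Let $\Theta\in U(2)$ be $(z_1,z_2)\mapsto(z_1,e^{i\theta}z_2)$ and set $L':=\Theta(L)$, again an embedded Lagrangian torus; as $\Theta$ fixes $\{z_2=0\}$ pointwise, $C\subset L'$ as well. The elementary but crucial observation is that a point of $L\cap L'$ lying off $C$ is exactly a choice of four distinct points of $\gamma$ forming a rectangle of aspect ratio $\rho$: such a point equals $(\tfrac{p+q}{2},\tfrac{p-q}{2})=(\tfrac{p'+q'}{2},e^{i\theta}\tfrac{p'-q'}{2})$, which says the chords $\{p,q\}$ and $\{p',q'\}$ share a midpoint and have half-diagonal vectors related by rotation through $\theta$, i.e.\ are the two diagonals of such a rectangle.

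Next I would study how $L$ and $L'$ meet along $C$. At $x=(p,0)\in C$, writing $\ell=T_p\gamma\subset\C$ for the tangent line, one computes $T_xL=\ell\oplus\ell$ and $T_xL'=\ell\oplus e^{i\theta}\ell$, so $T_xL\cap T_xL'=\ell\oplus\{0\}=T_xC$ because $\theta\notin\{0,\pi\}$. Thus $L$ and $L'$ meet cleanly along $C$, for every smooth $\gamma$ --- no perturbation is needed. Now suppose toward a contradiction that $\gamma$ inscribes no rectangle similar to $R$. Then $L\cap L'=C$, so $L\setminus C$ and $L'\setminus C$ are disjoint embedded cylinders, and we may perform a Lagrangian surgery (following Polterovich) on $L\cup L'$ along the clean circle $C$, excising a tubular neighbourhood of $C$ in $L\cup L'$ and gluing in a standard Lagrangian neck. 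Because $L\cap L'=C$ exactly, the neck can be taken inside a tubular neighbourhood of $C$ in $\C^2$ that meets $L\cup L'$ only along the excised piece, so the outcome is a smooth closed \emph{embedded} Lagrangian surface $\Sigma\subset\C^2$.

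It remains to identify $\Sigma$, and this is the step I expect to be hardest. The circle $C$ is primitive (non-separating) on each torus $L$ and $L'$, so cutting each torus along $C$ produces a cylinder, and $\Sigma$ is built from these two cylinders together with the two cylinders comprising the surgery neck, glued in a cyclic pattern; a priori this is either a torus or a Klein bottle. The claim is that it is always the Klein bottle, and the point where the hypothesis that $\gamma$ is a \emph{simple} closed curve enters decisively is here: the Whitney--Graustein theorem (Umlaufsatz), that the turning number of $\gamma$ is $\pm1$, governs the framing of the Lagrangian neck and forces the cyclic gluing to be orientation-reversing. (Informally, this is why the method does not inscribe rectangles in arbitrary immersed closed curves.) Granting this, $\Sigma$ is a smooth Lagrangian embedding of the Klein bottle into $\C^2$, which is impossible by Shevchishin's theorem; hence $\gamma$ inscribes a rectangle similar to $R$. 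I expect the bulk of the work to lie in (i) the precise construction of the Lagrangian neck and the verification that the surgery yields a genuinely smooth, embedded, Lagrangian surface along a clean circle of intersection, and (ii) the framing/orientation computation identifying $\Sigma$ as the Klein bottle; by comparison, the symplectic set-up, the rectangle dictionary, the tangent-space computation, and the invocation of Shevchishin are routine. The square case $\rho=1$ ($\theta=\pi/2$) is covered with no change.
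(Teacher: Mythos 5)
Your symplectic set-up, the dictionary between points of $L\cap L'$ away from $C=\gamma\times\{0\}$ and inscribed rectangles of aspect angle $\theta$, and the clean-intersection computation along $C$ all match the paper, and a Lagrangian smoothing along the clean circle is indeed what the paper performs (via an equivariant Darboux--Weinstein/Po\'zniak local model). The fatal step is your identification of the surgered surface $\Sigma$: it is a torus, not a Klein bottle, so the intended contradiction with Shevchishin never materializes --- embedded Lagrangian tori in $\C^2$ are plentiful ($L$ itself is one). Concretely, cutting $L$ along $C$ gives the annulus $l\bigl((\gamma\times\gamma)\setminus\Delta\bigr)$, and its orientation restricts with the \emph{same} sign, in tubular-neighborhood coordinates $(\theta,s)$ along $C$, at both of its ends: the two ends of $(\gamma\times\gamma)\setminus\Delta$ return to the diagonal with the circle parameter shifted by $\pi$, an orientation-preserving identification; the same holds for $L'$. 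A short check in the local model $S^1\times\{st_2=\pm\delta\}$ then shows that, for either resolution of the cross, orientations of the two cut-open tori can be chosen compatibly across both surgery necks, so $\Sigma$ is orientable. (The paper says exactly this: its smoothing of $L\cup L_\phi$ is an immersed Lagrangian \emph{torus}.) The appeal to the turning number of $\gamma$ is a red herring: simplicity of $\gamma$ is used to make $L$, $L'$ embedded and the chord space $\Sym^2(\gamma)$ a M\"obius band, not through Whitney--Graustein, and no framing computation will convert this surgery into a Klein bottle.

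What is missing is the $\bZ/2\bZ$ symmetry that actually produces non-orientability. The involution $\tau\co(z,w)\mapsto(z,-w)$ preserves both $L$ and $L'$ (it swaps the two endpoints of a chord, an orientation-reversing involution of $\gamma\times\gamma$) and fixes $\C\times\{0\}$ pointwise. One must perform the smoothing $\tau$-equivariantly and so that it is disjoint from $\C\times\{0\}$; then $\tau$ acts freely on the resulting Lagrangian torus $T$, and the quotient $T/\tau$ --- realized concretely as $g(T)$, where $g(z,re^{i\alpha})=(z,(r/\sqrt2)e^{2i\alpha})$ is a symplectomorphism away from $\C\times\{0\}$ --- is a Lagrangian Klein bottle: the union of the two M\"obius bands $g(L)\cong\Sym^2(\gamma)$ and $g(L')$ glued along the smoothing of their common boundary. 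Under your standing assumption $L\cap L'=C$, this Klein bottle is embedded, contradicting Shevchishin; equivalently, as in the paper, since it must fail to be embedded, $L$ and $L'$ meet away from $C$, which is the desired rectangle. With that quotient step inserted (plus the equivariance and disjointness from $\C\times\{0\}$ in the smoothing, and the injectivity of $g$ on $T$), your outline becomes the paper's proof.
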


\begin{proof}
We consider the Euclidean plane $\C$ with complex coordinate $z = x + i \cdot y$ and take another copy with complex coordinate $w = r \cdot e^{i \theta}$.
In these coordinates, the standard symplectic structure on $\C^2$ is given by $\omega = dx \wedge dy + r \cdot dr \wedge d \theta$.
Consider the maps $l, g : \C^2 \to \C^2$ defined in a mix of complex and polar coordinates by
\[
l \colon (z,w) \mapsto \left( \frac{z+w}{2},\frac{z-w}{2}\right) \quad \textup{and} \quad g \colon (z,r,\theta) \mapsto (z,r/\sqrt{2},2 \theta) {\rm .}
\]
The map $l$ is a diffeomorphism and satisfies $l^*(\omega) = \omega/2$.  Away from $\C \times \{ 0 \}$, the map $g$ is smooth and satisfies $g^* (\omega) = \omega$.
The Jordan curve $\gamma$ is Lagrangian in $\C$, so both the product $\gamma \times \gamma$ and its image $L = l(\gamma \times \gamma)$ are smooth, Lagrangian tori in $\C^2$,
noting that Lagrangians with respect to $\omega$ coincide with those with respect to $\omega/2$.
For any $\phi \in \R$, the map
\[
R_\phi : \C^2 \to \C^2 \colon (z,r,\theta) \mapsto (z,r,\theta+\phi)
\]
is a symplectomorphism.  Fixing a choice $0 < \phi \leq \pi/2$, $L_\phi = R_\phi(L)$ is another smooth, Lagrangian torus.
By construction, $g \circ l (z,w) = g \circ l (z',w')$ if and only if $\{z,w\} = \{z',w'\}$.
It follows that $M = g(L)$ and $M_\phi = g(L_\phi)$ are both homeomorphic to a M\"obius band $\Sym^2(\gamma)$ and are smooth and Lagrangian away from $\C \times \{0\}$.

The map $R_\pi$ preserves each of $L$ and $L_\phi$, and it fixes $\gamma \times \{0\}$, where these two tori intersect {\em cleanly}: $T_p(\gamma \times \{0\}) = T_pL \cap T_p L_\phi$ at each point $p \in \gamma \times \{ 0 \}$.
We perform a Lagrangian smoothing of $L \cup L_\phi$ along $\gamma \times \{ 0 \}$ according to Proposition \ref{prop:tori_can_be_smoothed_nicely} below.
The result is a smoothly immersed Lagrangian torus in $\C \times \C$ that coincides with $L \cup L_\phi$ away from a neighborhood of $\gamma \times \{ 0 \}$, is disjoint from $\C \times \{ 0 \}$, and on which $R_\pi$ acts as a fixed-point free involution.
Its image under $g$ is therefore a smoothly immersed, Lagrangian Klein bottle $K$ which coincides with $M \cup M_\phi$ outside of a neighborhood of $\gamma \times \{ 0 \}$ and is embedded within this neighborhood.
Shevchishin has shown that there is no smoothly embedded, Lagrangian Klein bottle in $\C^2$~\cite{shevchishin2009}.
Therefore, $M$ and $M_\phi$ must intersect at a point away from $\gamma\times\{ 0 \}$, so $L$ and $L_\phi$ do as well, say at the point $(z, re^{i(\theta + \phi)})$.
It follows that the four points $z \pm re^{i \theta}$ and $z \pm re^{i(\theta + \phi)}$ all lie on the Jordan curve $\gamma$.
These points form the vertices of a rectangle whose diagonals meet at an angle of $\phi$.
As $\phi \in (0,\pi/2]$ was arbitrary, the proof is complete.
\end{proof}

The proof establishes somewhat more:
\begin{porism*}
For every smooth Jordan curve $\gamma$ and smooth map $\phi \colon [0, \infty) \rightarrow (0,\pi)$, there exists $r>0$ such that $\gamma$ contains the vertices of a rectangle of diameter $r$ whose diagonals meet at angle $\phi(r)$.
\qed
\end{porism*}

\noindent
We simply note that the map
\[ S_\phi \colon \C^2 \rightarrow \C^2 \colon (z,r,\theta) \mapsto (z, r, \theta + \phi(2r)) \]
is a symplectomorphism, so $L_\phi = S_\phi(L)$ is a Lagrangian torus, invariant under $R_\pi$ and meeting $L$ cleanly along $\gamma \times \{ 0 \}$.
The main result covers the case of a constant function $\phi$.

\section{Lagrangian smoothing.}
We now turn to the smoothing used in the proof of the theorem:

\begin{prop}
	\label{prop:tori_can_be_smoothed_nicely}
	One may remove a neighborhood of $\gamma \times \{ 0 \}$ in $L  \cup L_\phi$ and replace it with two disjoint Lagrangian annuli.
	The surgery may be performed so as to result in a smoothly immersed Lagrangian torus $T$ such that $R_\pi(T) = T$ and $T$ is disjoint from $\C \times \{ 0 \}$.
\end{prop}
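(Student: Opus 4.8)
The plan is to reduce to a local model near the clean intersection circle $C := \gamma\times\{0\}$, perform an explicit surgery there, and glue back. First I would build Weinstein-type coordinates: since $\C\times\{0\}$ is symplectic and $\gamma$ is an embedded circle in it, a neighborhood of $\gamma$ in $\C\times\{0\}$ is symplectomorphic to a neighborhood of the zero section in $T^*S^1$; taking the product with the $\C_w$-factor identifies a neighborhood of $C$ in $\C^2$ symplectically with a neighborhood of $S^1\times\{0\}$ in $S^1_s\times\R_t\times\C_w$ carrying $\omega = ds\wedge dt + du\wedge dv$ (writing $w=u+iv$), under which $C=\{t=u=v=0\}$, $\C\times\{0\}=\{u=v=0\}$, and $R_\pi$ becomes $(s,t,u,v)\mapsto(s,t,-u,-v)$---whose fixed locus is exactly $\{u=v=0\}=\C\times\{0\}$, since $R_\pi$ fixes $\C\times\{0\}$ pointwise and sends $w\mapsto-w$.

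Next I would straighten $L$ and $L_\phi$. Since $l$ is linear with $dl(\xi,\eta)=(\tfrac{\xi+\eta}{2},\tfrac{\xi-\eta}{2})$, at a point $p=(c,0)\in C$ with $c=\gamma(s)$ one finds $T_pL = T_pC\oplus\R\gamma'(s)$ and, as $R_\phi$ fixes $p$ and rotates $\C_w$ by $\phi$, $T_pL_\phi = dR_\phi(T_pL)=T_pC\oplus\R e^{i\phi}\gamma'(s)$, the added summands lying in $\C_w$; in particular $T_pL\cap T_pL_\phi=T_pC$ precisely because $\phi\in(0,\pi)$, which is the clean intersection. Applying the fibrewise rotation $w\mapsto e^{-i\arg\gamma'(s)}w$---which becomes symplectic after a correcting $s$-dependent shear in $t$, both maps commuting with $R_\pi$ as they preserve $\{u=v=0\}$---renders these tangent lines constant along $C$, and a further constant element of $SL_2(\R)$ acting on $\C_w$ straightens them to $\langle\partial_s,\partial_u\rangle$ for $L$ and $\langle\partial_s,\partial_v\rangle$ for $L_\phi$. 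A relative Moser argument, carried out $R_\pi$-equivariantly and fixing $C$ pointwise, then upgrades this first-order agreement along $C$ to an honest symplectomorphism of a neighborhood of $C$ taking $L$ to $\{t=v=0\}$ and $L_\phi$ to $\{t=u=0\}$. This equivariant, simultaneous straightening of the two Lagrangians---a version of Weinstein's neighborhood theorem for a pair of cleanly intersecting Lagrangians---is the step I expect to be the main obstacle.

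In the resulting normal form a neighborhood of $C$ in $L\cup L_\phi$ is $\{t=0\}\times(A_u\cup A_v)$, with $A_u,A_v\subset\C_w$ the two coordinate axes. I would fix a small $\epsilon>0$, take a smooth embedded arc $\Gamma_+$ in the open first quadrant of $\C_w$ that agrees with $A_u$ for $u\geq\epsilon$ and with $A_v$ for $v\geq\epsilon$, and set $\Gamma_-:=-\Gamma_+$. Then $\{t=0\}\times(\Gamma_+\sqcup\Gamma_-)$ is a pair of disjoint annuli which is Lagrangian (as $ds\wedge dt+du\wedge dv$ vanishes on the product of $\{t=0\}$ with any curve in $\C_w$), disjoint from $\C\times\{0\}=\{u=v=0\}$ (as $\Gamma_\pm$ miss the origin), carried to itself by $R_\pi$ (which swaps $\Gamma_+$ and $\Gamma_-$), and equal to $L\cup L_\phi$ near the boundary of the neighborhood. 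Transporting back through the normal form and gluing yields a smooth closed Lagrangian surface $T$ with $R_\pi(T)=T$; and because $(L\cup L_\phi)\cap(\C\times\{0\})=C$---indeed $l(z,w)\in\C\times\{0\}$ forces $z=w$, so the point is $(z,0)$ with $z\in\gamma$---while we have excised a neighborhood of $C$, the surface $T$ is disjoint from $\C\times\{0\}$. Finally $C$ is non-separating on each of $L$ and $L_\phi$ (it is the $(1,1)$-curve on $\gamma\times\gamma$), so cutting both tori along $C$ and regluing along $\Gamma_\pm$ joins two annuli along their four boundary circles into a connected surface of Euler characteristic $0$; choosing the resolution compatibly with orientations makes $T$ orientable, hence a torus---it is only immersed, not embedded, because $L$ and $L_\phi$ may meet away from $C$, which is irrelevant here.
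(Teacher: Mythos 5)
Your overall architecture is the same as the paper's: reduce to an $R_\pi$-equivariant symplectic normal form near $\gamma \times \{0\}$ in which $L \cup L_\phi$ becomes the product of a circle with a transverse cross in a plane, then resolve the cross by a pair of arcs swapped by the involution; the resulting two annuli are visibly Lagrangian, avoid $\C \times \{0\}$, and glue to an $R_\pi$-invariant immersed Lagrangian torus. The second half of your argument (the smoothing, equivariance, disjointness from $\C\times\{0\}$, and the topology of the result) is correct and matches the paper's proof of Proposition~\ref{prop:tori_can_be_smoothed_nicely} essentially verbatim, and your product coordinates and tangent-space computation along $\gamma\times\{0\}$ are fine.

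The gap is exactly the step you flag: the simultaneous, $R_\pi$-equivariant symplectic straightening of the cleanly intersecting pair $(L, L_\phi)$. First-order agreement along $\gamma\times\{0\}$ plus an appeal to ``a relative Moser argument'' is not enough as stated: to run Moser you need an isotopy of forms (or of embeddings) whose generating vector field is tangent to \emph{both} Lagrangians and commutes with the involution, and producing the primitive with the required vanishing along a pair of submanifolds meeting cleanly --- not transversally, not disjointly --- is precisely the nontrivial content, which is why this is not an off-the-shelf Weinstein theorem. The paper handles it in two stages rather than at once: an equivariant Darboux--Weinstein argument (Lemmas~\ref{lem:sketch_weinstein_result} and~\ref{lem:good_tubular_neighborhood}) straightens $L$ alone, equivariantly; then Po\'zniak's clean-intersection lemma (Lemma~\ref{lem:can_make_that_other_lagrangian_look_straight}) straightens $L_\phi$ \emph{relative to} the already-straightened $L$, via the explicit graph map $G(\theta,s,t_1,t_2)=(\theta,s',t'_1,t_2)$ determined by $(\theta,s-s',t_1-t'_1,t_2)\in L_2$, which is symplectic because $L_2$ is Lagrangian, fixes $L_0$, and is checked to commute with $I$ using $I(L_2)=L_2$; together these give Proposition~\ref{prop:the_local_model_is_a_local_model}. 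Something playing the role of this second, relative step (Po\'zniak's construction \cite{pozniak}, or an equivariant clean-intersection Moser argument in which tangency of the flow to both Lagrangians is actually established) is what your proposal is missing; with it supplied, the rest goes through. Two minor remarks: your normal form should also accommodate $L_\phi=S_\phi(L)$, which is needed for the porism; and no orientation choice is really needed at the resolution step --- either equivariant resolution yields a torus, since all four pieces are circle-products glued by the identity on the circle factor.
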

\noindent Here $L_\phi$ may denote either $R_\phi(L)$ or $S_\phi(L)$ from the previous section.
{\em A posteriori} we obtain a Lagrangian smoothing of $M \cup M_\phi$ nearby the common boundary $\del M = \del M_\phi = \gamma \times \{ 0 \} \subset \C \times \{ 0 \}$, but we found it more direct to work rather with $L \cup L_\phi$, due to the non-smoothness of $g$ at $\C \times \{ 0 \}$.

Proposition~\ref{prop:tori_can_be_smoothed_nicely} will not come as a surprise to symplectic geometers, although we could not locate the desired result in the literature.
It can be phrased as a consequence of a simple case of the equivariant Darboux-Weinstein theorem in the presence of a compatible clean intersection of Lagrangians.
We shall prove the proposition by establishing a linear local model for $L \cup L_\phi$ near $\gamma \times \{0\}$. 
The local model is the 4-manifold $X = S^1 \times (-\epsilon, \epsilon) \times \bR \times \bR$ with coordinates $(\theta, s, t_1, t_2)$, symplectic form $\omega_X = d\theta \wedge dt_1 + ds \wedge dt_2$, and symplectic involution
	\[ I \colon X \to X \colon (\theta, s, t_1, t_2) \mapsto (\theta, -s, t_1, -t_2) \mathrm{.} \]
It contains Lagrangian submanifolds $L_0 = {S^1 \times (-\epsilon, \epsilon) \times \{ 0 \} \times \{ 0 \}}$ and $L_1 = S^1 \times  \{ 0 \} \times \{ 0 \} \times \bR$, which intersect each other cleanly in $\Gamma = S^1 \times \{ 0 \} \times \{ 0 \} \times \{ 0 \}$.
\begin{prop}
	\label{prop:the_local_model_is_a_local_model}
	There exists a symplectomorphism
	\[ \Psi \colon \cN(\Gamma) \rightarrow \cN(\gamma) \]
	from a neighborhood of $\Gamma$ in $X$ to a neighborhood of $\gamma \times \{ 0 \}$ in $\C^2$ such that
	\begin{enumerate}
		\item $\Psi(\Gamma) = \gamma \times \{ 0 \}$,
		\item $\Psi (L_0 \cap \cN(\Gamma))= L \cap \cN(\gamma)$,
		\item $R_\pi \circ \Psi = \Psi \circ I$, and
		\item $\Psi (L_1 \cap \cN(\Gamma)) = L_\phi \cap \cN(\gamma)$.
	\end{enumerate}
\end{prop}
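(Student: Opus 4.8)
The plan is to prove Proposition~\ref{prop:the_local_model_is_a_local_model} in two stages: first construct a diffeomorphism $\Psi_0 \colon \cN(\Gamma) \to \cN(\gamma)$ realizing properties (1)--(4) \emph{exactly} together with the weaker condition $\Psi_0^*\omega = \omega_X$ only along $\Gamma$; then upgrade $\Psi_0$ to a genuine symplectomorphism by a Moser deformation that fixes $\Gamma$ pointwise, preserves $L_0$ and $L_1$, and commutes with $I$. The guiding observation is that $R_\pi \colon (z,w) \mapsto (z,-w)$ is a \emph{linear} map of $\C^2$, which will make all the equivariance statements essentially free.

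For the first stage, fix a parametrization $c \colon S^1 \to \gamma$. The involution $R_\pi$ restricts to an involution of $L$ (indeed $R_\pi\circ l(a,b) = l(b,a)$) whose fixed point set is exactly $\gamma \times \{0\}$, so the equivariant tubular neighbourhood theorem — the relevant normal line bundle over $S^1$ being trivial since $L$ is orientable — yields an \emph{equivariant} collar $P \colon S^1 \times (-\delta,\delta) \to L$ with $P(\theta,0) = (c(\theta),0)$ and $R_\pi\circ P(\theta,s) = P(\theta,-s)$, and likewise an equivariant collar $Q \colon S^1\times(-\delta,\delta) \to L_\phi$ with $Q(\theta,0) = (c(\theta),0)$ and $R_\pi\circ Q(\theta,\tau) = Q(\theta,-\tau)$. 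Let $V^\pm$ denote the $(\pm1)$-eigenbundles of $dR_\pi$ along $\gamma\times\{0\}$; thus $V^+ = T(\C\times\{0\})$ and $V^-$ are mutually $\omega$-orthogonal rank-$2$ symplectic bundles. Equivariance of $P$ and $Q$ forces $\partial_sP(\theta,0)\in V^-$ and $\partial_\tau Q(\theta,0)\in V^-$, and these span the lines $TL\cap V^-$ and $TL_\phi\cap V^-$ respectively; the clean intersection hypothesis $TL\cap TL_\phi = T(\gamma\times\{0\})\subseteq V^+$ makes these two lines distinct, hence a basis of $V^-$. After rescaling $\tau$ by a smooth nowhere-zero function of $\theta$ we may assume $\omega(\partial_sP(\theta,0),\partial_\tau Q(\theta,0)) = 1$, and we may pick a smooth section $N(\theta)$ of $V^+$ with $\omega(c'(\theta),N(\theta)) = 1$ (possible since affine bundles over $S^1$ are trivial). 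Now define, using the vector-space structure of $\C^2$,
\[ \Psi_0(\theta,s,t_1,t_2) \;=\; P(\theta,s) + Q(\theta,t_2) - (c(\theta),0) + t_1\,N(\theta). \]
Then $\Psi_0(\theta,s,0,0) = P(\theta,s)\in L$ and $\Psi_0(\theta,0,0,t_2) = Q(\theta,t_2)\in L_\phi$, giving (1), (2) and (4); linearity of $R_\pi$ together with equivariance of $P$ and $Q$ gives $R_\pi\circ\Psi_0 = \Psi_0\circ I$, which is (3); and the computation above shows that at points of $\Gamma$ the differential $d\Psi_0$ sends the symplectic frame of $\omega_X$ to a symplectic frame of $\omega$, so $\Psi_0$ is a diffeomorphism near $\Gamma$ (shrinking $\cN(\Gamma)$, so that $L\cap\cN(\gamma)$ and $L_\phi\cap\cN(\gamma)$ are the single annular sheets through $\gamma\times\{0\}$) with $\Psi_0^*\omega = \omega_X$ along $\Gamma$.

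For the second stage, set $\omega_0 = \omega_X$ and $\omega_1 = \Psi_0^*\omega$ on $\cN(\Gamma)$. Both are $I$-invariant (the first by inspection, the second because $I^*\Psi_0^*\omega = (R_\pi\circ\Psi_0)^*\omega = \Psi_0^*\omega$ as $R_\pi$ is symplectic), both pull back to $0$ on $L_0$ and on $L_1$ (because $L$ and $L_\phi$ are Lagrangian), and they agree along $\Gamma$, so $\mu := \omega_1 - \omega_0$ is a closed $2$-form vanishing along $\Gamma$ and restricting to $0$ on $L_0$ and $L_1$. Feeding $\mu$ into the homotopy operator of the fibrewise radial contraction $(\theta,s,t_1,t_2)\mapsto(\theta,\rho s,\rho t_1,\rho t_2)$ — which preserves $L_0$, $L_1$, $\Gamma$ and commutes with $I$ — produces a $1$-form $\lambda$ with $d\lambda = \mu$ that vanishes at every point of $\Gamma$, pulls back to $0$ on $L_0$ and $L_1$, and is $I$-invariant. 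The Moser vector field $X_t$ defined by $\iota_{X_t}\omega_t = -\lambda$ for $\omega_t = \omega_0 + t\mu$ then vanishes on $\Gamma$, is tangent to the $\omega_t$-Lagrangians $L_0$ and $L_1$, and is $I$-invariant, so its time-$1$ flow $\psi_1$ (defined after a further shrinking) fixes $\Gamma$ pointwise, preserves $L_0$ and $L_1$, commutes with $I$, and satisfies $\psi_1^*\omega_1 = \omega_0$. Then $\Psi := \Psi_0\circ\psi_1$ obeys $\Psi^*\omega = \psi_1^*\omega_1 = \omega_X$ and, since $\psi_1$ fixes $\Gamma$ and preserves $L_0,L_1$ and commutes with $I$ while $\Psi_0$ already satisfies (1)--(4), so does $\Psi$.

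\textbf{Main obstacle.} I expect the construction of $\Psi_0$ to be the delicate part: one has to meet \emph{all four} matching conditions on the nose (not merely to first order) and simultaneously normalize $\Psi_0^*\omega$ to $\omega_X$ along $\Gamma$, and one must verify that the ingredients — the eigenbundle splitting $V^\pm$, the equivariant collars of $L$ and $L_\phi$, and the auxiliary section $N$ — can all be chosen smoothly over $S^1$ (which reduces to the triviality of line and affine bundles over the circle and the orientability of $L$). The linearity of $R_\pi$ is precisely what makes the additive formula for $\Psi_0$ equivariant, and it makes the radial homotopy operator equivariant as well, so that no explicit averaging is needed; for the same reason it is essential to use the radial homotopy operator rather than an arbitrary primitive, so that the vanishing of $\lambda$ on $L_0\cup L_1$ (a set that is singular along $\Gamma$) comes for free. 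Granting these, the Moser step is standard.
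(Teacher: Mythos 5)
Your proposal is correct, but it follows a genuinely different route from the paper. The paper splits the work into two cited black boxes: it first builds a tubular neighborhood $F'$ of $\gamma \times \{0\}$ using the exponential map inside $L$ and an $R_\pi$-adapted normal frame, corrects $F'$ to a symplectomorphism $F$ matching only $L$ by invoking the equivariant Darboux--Weinstein theorem relative to $L_0$ (Lemma~\ref{lem:sketch_weinstein_result}), and then straightens the second Lagrangian $L_2 = F^{-1}(L_\phi)$ onto $L_1$ using Po\'zniak's clean-intersection normal form, checking by hand that Po\'zniak's map commutes with $I$; the composite $F \circ G^{-1}$ is $\Psi$. You instead construct a single diffeomorphism $\Psi_0$ matching \emph{both} Lagrangians on the nose (two equivariant collars plus a normal section, with the symplectic pairings normalized along $\Gamma$) and then run one equivariant Moser deformation relative to the singular union $L_0 \cup L_1$ and to $\Gamma$, using the fibrewise radial homotopy operator so that the primitive $\lambda$ vanishes on both Lagrangians and is $I$-invariant without averaging. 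Your frame computation along $\Gamma$ is sound: $V^\pm$ are $\omega$-orthogonal because $R_\pi$ is symplectic, the two $V^-$-lines are distinct by cleanness, and all six pairings match those of $\omega_X$; and the Moser step works because $L_0, L_1$ stay $\omega_t$-Lagrangian, so the vector field dual to $\lambda$ is tangent to both, vanishes on $\Gamma$, and is $I$-invariant. What the paper's route buys is brevity and the ability to defer the relative Moser analysis to the references; what yours buys is self-containedness, in effect reproving the needed case of equivariant Darboux--Weinstein together with Po\'zniak's lemma in one stroke, at the price of the one genuinely delicate point you correctly isolate, namely doing Moser relative to the non-smooth set $L_0 \cup L_1$, which your radial primitive handles. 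The remaining elisions (injectivity of $\Psi_0$ near the compact $\Gamma$, shrinking neighborhoods so the set-theoretic equalities in (2) and (4) hold, time-one existence of the flow) are routine, and the paper glosses the analogous points.
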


\begin{proof}[Proof of Proposition \ref{prop:tori_can_be_smoothed_nicely}]
	Let $A = \{ (s,t_2) \in \bR^2 \colon s t_2 = 0 \}$ denote the union of the usual axes in Euclidean space.
	Under the map $\Psi^{-1}$ of Proposition \ref{prop:the_local_model_is_a_local_model}, the union of the Lagrangians $L  \cup L_\phi$ is modelled near $\gamma \times \{0\}$ as $S^1 \times A \times \{ 0 \}$, where we have exchanged coordinates $t_1$ and $t_2$.
	We pick a smoothing $B$ of $A \subset \bR^2$ near the origin whose components are exchanged by $I$ (which acts as rotation by $\pi$ on this plane).
	Observe that $S^1 \times B \times \{ 0 \}$ is Lagrangian with respect to $\omega_X$.
	Replacing $(L \cup L_\phi) \cap \cN(\gamma)$ by $\Psi((S^1 \times B \times \{0\}) \cap \cN(\Gamma))$ gives the desired smoothing.
\end{proof}

The technical work of this section, then, is to derive Proposition~\ref{prop:the_local_model_is_a_local_model}.  The next lemma is phrased for our situation and is a case of the \emph{Equivariant Darboux-Weinstein Theorem} \cite[Theorem 3.2]{hamil_group_action}.

\begin{lem}
	\label{lem:sketch_weinstein_result}
	Suppose that $\omega_0$ and $\omega_1$ are  symplectic forms in a neighborhood of $L_0 \subset X$ for which $L_0$ is Lagrangian and which satisfy $I^*(\omega_i) = \omega_i$ for $i = 0,1$.  Then there exist neighborhoods $U_0$ and $U_1$ of $\Gamma$ and a diffeomorphism
	\[ \sigma \colon U_0 \rightarrow U_1 \]
	such that $\sigma$ commutes with $I$, $\sigma^*\omega_1 = \omega_0$, and $\sigma$ restricts to the identity on $L_0 \cap U_0$. \qed
\end{lem}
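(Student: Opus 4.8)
The plan is to prove the lemma by the relative Moser homotopy method with respect to $L_0$, carrying out every choice $\langle I \rangle$-equivariantly, where $\langle I \rangle \cong \bZ / 2\bZ$ is the order-two group generated by the symplectic involution $I$; note that $I$ fixes $\Gamma$ pointwise but moves the points of $L_0 \setminus \Gamma$, so that ``commutes with $I$'' and ``restricts to the identity on $L_0$'' are genuinely independent demands.

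The first step is a linear normalization: I would reduce to the case in which $\omega_0$ and $\omega_1$ agree along $L_0$ not merely on $TL_0$ (where both vanish, $L_0$ being Lagrangian) but on all of $TX|_{L_0}$, i.e.\ $(\omega_1-\omega_0)_p = 0$ as a $2$-form on $T_pX$ for every $p \in L_0$. At a point $p \in \Gamma$ the differential $dI_p$ is an involution of $T_pX$, so $T_pX$ splits $\langle I \rangle$-equivariantly into $\pm 1$-eigenspaces $V_p^{\pm}$; each is symplectic for both $\omega_i$ and the two are $\omega_i$-orthogonal, while $T_pL_0$ is $dI_p$-invariant with $T_p\Gamma \subseteq V_p^+$. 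Applying separately to $\omega_0|_{V_p^{\pm}}$ and $\omega_1|_{V_p^{\pm}}$ the elementary fact that two symplectic forms on a vector space sharing a common Lagrangian subspace $W$ are intertwined by a linear automorphism equal to the identity on $W$ (choose Lagrangian complements of $W$ for each form and identify both forms with the standard one on $W \oplus W^*$), one gets an $\langle I \rangle$-equivariant $A_p \in \mathrm{GL}(T_pX)$, the identity on $T_pL_0$, with $\omega_1|_p(A_p\cdot, A_p\cdot) = \omega_0|_p$; at points of $L_0 \setminus \Gamma$ there is no equivariance constraint and the same construction applies. These $A_p$ may be chosen to depend smoothly on $p$ and then realized as $d\psi|_{L_0}$ for a diffeomorphism $\psi$ of a neighborhood of $L_0$ that commutes with $I$ and fixes $L_0$ pointwise. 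Replacing $\omega_1$ by $\psi^* \omega_1$, and at the very end postcomposing the map we build with $\psi$, we may assume the desired equality on $TX|_{L_0}$.

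Now set $\omega_t = \omega_0 + t(\omega_1 - \omega_0)$ for $t \in [0,1]$. Each $\omega_t$ is closed, $I$-invariant, and equals $\omega_0$ along $L_0$, hence is nondegenerate on a neighborhood of $L_0$; since $\Gamma$ is compact we may take this neighborhood uniform in $t$. Choosing an $\langle I \rangle$-invariant tubular neighborhood of $L_0$ (average a metric over $\langle I \rangle$, or just use the evident flat structure on $X$) together with its $\langle I \rangle$-equivariant fiberwise-linear retraction onto $L_0$, the standard homotopy operator applied to the closed form $\omega_1 - \omega_0$ yields a $1$-form $\alpha$ with $d\alpha = \omega_1 - \omega_0$, with $I^*\alpha = \alpha$, and with $\alpha_p = 0$ for every $p \in L_0$ (the last because $\omega_1 - \omega_0$ vanishes identically there). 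Define the time-dependent vector field $X_t$ by $\iota_{X_t}\omega_t = -\alpha$: it is smooth, satisfies $I_* X_t = X_t$, and vanishes along $L_0$. Its flow $\sigma_t$ therefore fixes $L_0$ pointwise, so by compactness of $\Gamma$ it is defined for all $t \in [0,1]$ over a sufficiently small $\langle I \rangle$-invariant neighborhood $U_0$ of $\Gamma$, and it commutes with $I$. The Cartan computation $\frac{d}{dt}\sigma_t^*\omega_t = \sigma_t^*\big(\mathcal{L}_{X_t}\omega_t + (\omega_1-\omega_0)\big) = \sigma_t^*\big(d\iota_{X_t}\omega_t + (\omega_1-\omega_0)\big) = \sigma_t^*(-d\alpha + d\alpha) = 0$ then gives $\sigma_1^*\omega_1 = \omega_0$; taking $\sigma = \sigma_1$ (precomposed with $\psi$ from the first step) and $U_1 = \sigma(U_0)$ finishes the construction.

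I expect the main obstacle to be the equivariance bookkeeping rather than the Moser trick itself — concretely, executing the preliminary linear normalization compatibly with $dI_p$ at the points of $\Gamma$, and arranging that the primitive $\alpha$ is $I$-invariant while still vanishing along $L_0$. Both are resolved by combining the eigenspace decomposition under the order-two $dI_p$ with averaging over the finite group $\langle I \rangle$ (of a metric, the tubular neighborhood and retraction, and, if one prefers, the $1$-form $\alpha$ itself), an averaging that is unobstructed precisely because $\langle I \rangle$ is finite; one must also verify throughout that the modifications of $\omega_1$ keep it $I$-invariant and keep $L_0$ Lagrangian, which is immediate from $\psi \circ I = I \circ \psi$ and $\psi|_{L_0} = \mathrm{id}$.
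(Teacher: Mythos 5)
Your argument is correct: it is the standard equivariant Moser/Weinstein deformation (equivariant linear normalization of the two forms along $L_0$, then an $I$-invariant primitive and Moser flow near the compact curve $\Gamma$), which is exactly the proof of the equivariant Darboux--Weinstein theorem that the paper invokes by citation \cite[Theorem 3.2]{hamil_group_action} rather than proving. Two cosmetic nits: equivariance of the bundle automorphism $A$ does impose a condition relating the choices at $p$ and $I(p)$ for $p \in L_0 \setminus \Gamma$ (resolved by your averaging/canonical choices), and the final map should be $\psi \circ \sigma_1$ (postcomposition with $\psi$, as you first stated), not $\sigma_1$ precomposed with $\psi$.
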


Next we use Lemma~\ref{lem:sketch_weinstein_result} to obtain a local model for the Lagrangian $L$ near $\gamma \times \{ 0 \}$.
It establishes Proposition~\ref{prop:tori_can_be_smoothed_nicely} apart from the final item.

\begin{lem}
	\label{lem:good_tubular_neighborhood}
	There exists a symplectomorphism
	\[ F \colon \cN(\Gamma) \rightarrow \cN(\gamma) \]
	from a neighborhood $\cN(\Gamma)$ of $\Gamma$ in $X$ to a neighborhood $\cN(\gamma)$ of $\gamma \times \{ 0 \}$ in $\C^2$ such that

	\begin{enumerate}
		\item $F(\Gamma) = \gamma \times \{ 0 \}$,
		\item $F (L_0 \cap \cN(\Gamma))= L \cap \cN(\gamma)$, and
		\item $R_\pi \circ F = F \circ I$.
	\end{enumerate}
\end{lem}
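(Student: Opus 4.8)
The plan is to factor $F$ through an ordinary (non-symplectic) equivariant diffeomorphism and then repair it by a relative Moser argument, i.e., by Lemma~\ref{lem:sketch_weinstein_result}. First I would build a diffeomorphism $\Phi_0$ from a neighborhood of $L_0$ in $X$ onto a neighborhood of $\gamma\times\{0\}$ in $\C^2$ with $\Phi_0(\Gamma)=\gamma\times\{0\}$, $\Phi_0(L_0)\subseteq L$, and $R_\pi\circ\Phi_0=\Phi_0\circ I$. Then, setting $\omega_0:=\omega_X$ and $\omega_1:=\Phi_0^*\omega$, Lemma~\ref{lem:sketch_weinstein_result} supplies an $I$-equivariant diffeomorphism $\sigma\colon U_0\to U_1$ between neighborhoods of $\Gamma$ with $\sigma^*\omega_1=\omega_X$ and $\sigma|_{L_0\cap U_0}=\mathrm{id}$, and I would take $F:=\Phi_0\circ\sigma$. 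This $F$ is a symplectomorphism because $F^*\omega=\sigma^*\Phi_0^*\omega=\sigma^*\omega_1=\omega_X$; it satisfies (1) because $F(\Gamma)=\Phi_0(\sigma(\Gamma))=\Phi_0(\Gamma)=\gamma\times\{0\}$; it satisfies (3) because $R_\pi\circ F=R_\pi\circ\Phi_0\circ\sigma=\Phi_0\circ I\circ\sigma=\Phi_0\circ\sigma\circ I=F\circ I$; and (2) follows after shrinking the neighborhoods, since near $\Gamma$ the $2$-dimensional submanifold $F^{-1}(L)$ contains the $2$-dimensional $L_0$ and hence coincides with it there.

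The real work is the construction of $\Phi_0$, which I would carry out starting on the Lagrangians. Fix a smooth parametrization $\gamma\colon S^1\to\C$ and set
\[ f_0(\theta,s):=l\bigl(\gamma(\theta+s),\gamma(\theta-s)\bigr)=\Bigl(\tfrac12\bigl(\gamma(\theta+s)+\gamma(\theta-s)\bigr),\ \tfrac12\bigl(\gamma(\theta+s)-\gamma(\theta-s)\bigr)\Bigr). \]
For $\epsilon$ small this is a diffeomorphism from $L_0=S^1\times(-\epsilon,\epsilon)$ onto a neighborhood $L'$ of $\gamma\times\{0\}$ in $L=l(\gamma\times\gamma)$; it carries $\Gamma$ to $\gamma\times\{0\}$; and it intertwines $I|_{L_0}\colon(\theta,s)\mapsto(\theta,-s)$ with $R_\pi|_{L'}$, because $R_\pi$ acts on $L$ as the transposition of the two factors of $\gamma\times\gamma$ while $(\theta,s)\mapsto(\theta,-s)$ interchanges $\gamma(\theta+s)$ and $\gamma(\theta-s)$. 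It remains to thicken $f_0$ equivariantly across the normal directions, and here I would extract the equivariant normal data from the symplectic forms: via $\omega_X$, the normal bundle $N(L_0/X)$ with its $I$-action is $T^*L_0$ with the cotangent lift of $I|_{L_0}$, and via $\omega$, the normal bundle $N(L'/\C^2)$ with its $R_\pi$-action is $T^*L'$ with the cotangent lift of $R_\pi|_{L'}$. Since $f_0$ intertwines the two base involutions, its natural lift to cotangent bundles is a $\bZ/2$-equivariant vector bundle isomorphism covering $f_0$. Composing this with an equivariant tubular neighborhood of $L'$ in $\C^2$, and using that a neighborhood of $L_0$ in $X$ is just a neighborhood of the zero section of $N(L_0/X)$ since $X$ is a trivial bundle over $L_0$, the equivariant tubular neighborhood theorem produces the required equivariant $\Phi_0$ extending $f_0$.

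The inputs to Lemma~\ref{lem:sketch_weinstein_result} are then quick to check. From $\omega_X=d\theta\wedge dt_1+ds\wedge dt_2$ and the formula for $I$ one sees that $I^*\omega_X=\omega_X$ and $\omega_X|_{L_0}=0$. For $\omega_1=\Phi_0^*\omega$: it is symplectic; $\omega_1|_{L_0}=\Phi_0^*(\omega|_L)=0$ since $\Phi_0(L_0)\subseteq L$ and $L$ is $\omega$-Lagrangian; and $I^*\omega_1=(\Phi_0\circ I)^*\omega=(R_\pi\circ\Phi_0)^*\omega=\Phi_0^*(R_\pi^*\omega)=\Phi_0^*\omega=\omega_1$ since $R_\pi$ is a symplectomorphism. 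So Lemma~\ref{lem:sketch_weinstein_result} applies, $\sigma$ is produced, and $F=\Phi_0\circ\sigma$ finishes the argument.

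I expect the construction of $\Phi_0$ to be the only substantive step, and within it the delicate point is the word \emph{equivariant}: a priori one must rule out an obstruction to matching $N(L_0/X)$ and $N(L'/\C^2)$ as $\bZ/2$-equivariant bundles over $f_0$, but recognizing both of them, through their symplectic forms, as cotangent bundles carrying cotangent lifts of $I|_{L_0}$ and $R_\pi|_{L'}$ makes the matching automatic once $f_0$ is shown to be equivariant --- the one place where the concrete shape of $l$ and $\gamma$ is used. One could instead bypass Lemma~\ref{lem:sketch_weinstein_result} altogether, applying the equivariant Weinstein Lagrangian neighborhood theorem to $L\subset\C^2$ and precomposing with the cotangent lift of $f_0$; I retain the Moser formulation only because Lemma~\ref{lem:sketch_weinstein_result} has already been singled out. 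Finally, the compatibility with $L_1$ and $L_\phi$ (the fourth conclusion of Proposition~\ref{prop:the_local_model_is_a_local_model}) is intentionally left out of this lemma and is handled separately.
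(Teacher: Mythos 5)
Your proposal is correct and follows essentially the same route as the paper: first build an $I$-equivariant diffeomorphism from a neighborhood of $\Gamma$ onto a neighborhood of $\gamma \times \{0\}$ carrying $L_0$ into $L$, and then correct it to a symplectomorphism by applying Lemma~\ref{lem:sketch_weinstein_result} to $\omega_X$ and the pullback of $\omega$. The only difference is cosmetic: the paper produces the equivariant tubular neighborhood from the exponential map within $L$ and an explicit $R_\pi$-equivariant orthonormal normal frame, whereas you use the parametrization $l(\gamma(\theta+s),\gamma(\theta-s))$ together with the cotangent-bundle identification of the normal bundles and the equivariant tubular neighborhood theorem.
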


\begin{proof}
	Parametrize the Jordan curve $\gamma \subset \C$ as $\gamma(\theta)$, where $\theta \in S^1$.  Now, $\gamma \times \{ 0 \}$ is a submanifold of $L$, so using the restriction of the standard metric on $\C^2$ to $L$, the exponential map
	\[ (\theta, s) \mapsto \exp_{(\gamma(\theta),0)} (s) \]
	identifies a neighborhood $S^1 \times (-\epsilon, \epsilon)$ of the normal bundle of $\gamma \times \{ 0 \}$ inside $L$ with a tubular neighborhood of $\gamma \times \{ 0 \}$ in $L$.  Since the standard metric is invariant under $R_\pi$, we have that $R_\pi$ preserves geodesics in $L$.  It follows that we have
	\[ (\theta, -s) \mapsto R_\pi(\exp_{(\gamma(\theta),0)}(s)) {\rm .} \]
	Next we take a smooth choice of orthonormal basis $\{ v^1_\theta, v^2_\theta \}$ for $(TL)^\perp\vert_{(\gamma(\theta),0)}$ (the orthogonal complement to $TL$ along $\gamma$) such that $v^1_\theta \in T_{(\gamma(\theta), 0 )}{\C \times \{ 0 \}}$.  Note that $(R_\pi)_*(v^1_\theta) = v^1_\theta$ and $(R_\pi)_*(v^2_\theta) = - v^2_\theta$.
	
	Decreasing $\epsilon$ and $\cN(\gamma)$ if necessary, we have a diffeomorphism
	\[ F' \colon S^1 \times (-\epsilon,\epsilon) \times (-\epsilon,\epsilon) \times (-\epsilon,\epsilon) \rightarrow \cN(\gamma) \colon
	(\theta, s, t_1, t_2) \mapsto
	\exp_{(\gamma(\theta),0)}(s) + t_1 v^1_\theta + t_2 v^2_\theta {\rm .} \]
	Since $R_\pi$ is a linear map, we have
	\[ F' \colon (\theta, -s, t_1, -t_2) \mapsto
	R_\pi(\exp_{(\gamma(\theta),0)}(s) + t_1 v^1_\theta + t_2 v^2_\theta) {\rm .} \]

Hence we observe that $F'$ satisfies all the required properties except possibly being a symplectomorphism. 
We now apply Lemma~\ref{lem:sketch_weinstein_result} to $\omega_X$ and to $F'^*(\omega)$.  Composing $F'$ with the resulting diffeomorphism $\sigma$ between neighborhoods of $\Gamma$ gives the required map $F$.
\end{proof}

It only remains to take account of the second Lagrangian $L_\phi$; we write $L_2 = F^{-1} (L_\phi)$.

\begin{lem}
	\label{lem:can_make_that_other_lagrangian_look_straight}
	There exists a symplectomorphism
	\[ G \colon U \rightarrow V \]
	defined on neighborhoods $U$ and $V$ of $\Gamma$ such that
	\begin{enumerate}
		\item $G$ restricts to the identity on $L_0$,
		\item $G \circ I = I \circ G$, and
		\item $G(L_2 \cap U)= L_1 \cap V$.
	\end{enumerate}
\end{lem}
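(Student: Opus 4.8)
The plan is to recognize $L_2 = F^{-1}(L_\phi)$ as the graph of an \emph{exact} $1$-form over $L_1$, written in the coordinates already at hand, and then take $G$ to be the fiberwise translation that straightens this graph. Near $L_1$ I regard $(\theta, t_2)$ as base coordinates and $(t_1, -s)$ as the dual fiber coordinates, so that a neighborhood of $\Gamma$ in $X$ becomes a piece of $T^*(S^1 \times (-\epsilon,\epsilon))$ with its canonical symplectic form, $L_1$ lying in the zero section. I first record what the construction and Lemma~\ref{lem:good_tubular_neighborhood} give about $L_2$: it is a Lagrangian submanifold of $X$; it contains $\Gamma$ (as $\gamma \times \{0\} \subset L_\phi$); it is invariant under $I = F^{-1} \circ R_\pi \circ F$ (since $R_\pi$ preserves $L_\phi$); and it meets $L_0$ cleanly along $\Gamma$, this being the clean intersection of $L$ and $L_\phi$ along $\gamma \times \{0\}$ transported by $F^{-1}$.

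Next I would show that $L_2$ is a graph over $L_1$ near $\Gamma$, i.e.\ that the projection $\pi \colon (\theta,s,t_1,t_2) \mapsto (\theta, t_2)$ restricts to a local diffeomorphism $L_2 \to S^1 \times (-\epsilon,\epsilon)$ along $\Gamma$. By a dimension count this amounts to $T_p L_2 \cap \ker d\pi_p = T_p L_2 \cap \langle \partial_s, \partial_{t_1} \rangle = 0$ for $p \in \Gamma$: if $c\, \partial_s + d\, \partial_{t_1} \in T_p L_2$, then $\omega_X(\partial_\theta,\, c\,\partial_s + d\,\partial_{t_1}) = d$ must vanish by isotropy of $T_p L_2 \ni \partial_\theta$, and then $c\,\partial_s \in T_p L_0 \cap T_p L_2 = T_p \Gamma$ forces $c = 0$. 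Hence near $\Gamma$ we may write
\[ L_2 = \{(\theta,\, g_1(\theta,t_2),\, g_2(\theta,t_2),\, t_2)\} \]
for smooth $g_1, g_2$ on a neighborhood of $S^1 \times \{0\}$. Since $\Gamma \subset L_2$ we get $g_1(\cdot,0) = g_2(\cdot, 0) = 0$, and $I$-invariance of $L_2$ forces $g_1$ to be odd and $g_2$ even in $t_2$.

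Then I would extract the generating function. In the cotangent picture $L_2$ is the graph of $\alpha := g_2\, d\theta - g_1\, dt_2$, and $L_2$ being Lagrangian is the statement that $\alpha$ is closed. Its period $\oint_{S^1 \times \{t_2\}} \alpha = \oint g_2(\theta,t_2)\, d\theta$ is independent of $t_2$ and vanishes at $t_2 = 0$ because $g_2(\cdot, 0) \equiv 0$; hence $\alpha = dk$ for a smooth function $k$ near $S^1 \times \{0\}$. Since $\alpha$ is invariant under $(\theta, t_2) \mapsto (\theta, -t_2)$ (by the parities of $g_1, g_2$), we may take $k$ even in $t_2$, and $\alpha$ vanishing on $\{t_2 = 0\}$ gives that $\partial_\theta k$ and $\partial_{t_2} k$ vanish on $\{t_2 = 0\}$. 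Explicitly, $L_2 = \{(\theta,\, -\partial_{t_2} k,\, \partial_\theta k,\, t_2)\}$ near $\Gamma$.

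Finally I would set $G$ to be the fiberwise translation by $-dk$,
\[ G \colon (\theta, s, t_1, t_2) \longmapsto \bigl(\theta,\ s + \partial_{t_2} k(\theta, t_2),\ t_1 - \partial_\theta k(\theta, t_2),\ t_2\bigr), \]
on a small $I$-invariant neighborhood $U$ of $\Gamma$, with $V = G(U)$. Being translation by the closed form $dk$, $G$ is a symplectomorphism; it carries $L_2$ onto $L_1$; it is the identity on $L_0 = \{t_1 = t_2 = 0\}$ because $\partial_\theta k$ and $\partial_{t_2} k$ vanish on $\{t_2 = 0\}$; and it commutes with $I$ because $k$ is even in $t_2$, so $\partial_{t_2} k$ is odd and $\partial_\theta k$ is even, which is exactly what forces $G \circ I = I \circ G$. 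I expect the main obstacle to be the normal-form analysis above: establishing simultaneously that $L_2$ is a graph over $L_1$, that its generating function is \emph{exact} rather than merely closed (this is where the vanishing of $g_2$ along $\Gamma$ is essential), and that the parity forced by $I$ lets the straightening $G$ at once fix $L_0$ and commute with $I$. Once those structural points are in hand, the three required properties of $G$ are a direct verification.
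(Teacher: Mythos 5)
Your construction is essentially the paper's: you write $L_2$ as a graph over $L_1$ near $\Gamma$ and take $G$ to be the fiberwise subtraction that straightens it --- your translation by $-dk$ is literally the paper's map $(\theta,s,t_1,t_2)\mapsto(\theta,s',t_1',t_2)$ determined by $(\theta,s-s',t_1-t_1',t_2)\in L_2$, with equivariance coming from $I(L_2)=L_2$ exactly as in the paper's parity/direct computation. The only difference is that where the paper cites Po\'zniak for the graphicality of $L_2$ over $L_1$ and for $G$ being a symplectomorphism, you prove these directly (clean-intersection dimension count, exactness of the generating $1$-form via the vanishing period), which is correct and self-contained.
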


\begin{proof}
	Observe that $L_2$ is a Lagrangian within a neighborhood of $\Gamma$ that satisfies $I(L_2) = L_2$ and that intersects $L_0$ cleanly in $\Gamma$.  
	Po\'zniak argues that there exist neighborhoods $U$ and $V$ of $\Gamma$ within which $L_2 \cap U$ is the graph of a function defined over $L_1$ \cite[Proposition 3.4.1 and Lemma 3.4.2]{pozniak}. Based on this feature he defines a map, which in our framework takes the form
	\[ G \colon U \rightarrow V \colon (\theta, s, t_1, t_2) \mapsto (\theta, s', t'_1, t_2), \]
	by the requirement that
	\[ (\theta, s - s', t_1 - t'_1, t_2) \in L_2 {\rm .} \]
	Such a map automatically satisfies properties (1) and (3), and Po\'zniak shows that $G$ is a symplectomorphism (this follows from $L_2$ being Lagrangian).
	To verify property (2) we directly compute
	\begin{align*}
		I \circ G (\theta, s, t_1, t_2) &= I(\theta, s', t'_1, t_2) = (\theta, -s', t'_1, -t_2) 
		= G(\theta, -s, t_1, -t_2) = G \circ I (\theta, s, t_1, t_2) {\rm ,}
	\end{align*}
where the third equality follows because
\[ (\theta, -(s - s'),t_1 - t'_1, -t_2) = I(\theta, s-s', t_1 - t'_1, t_2) \in I(L_2) = L_2 {\rm .} \qedhere \]
\end{proof}

\begin{proof}
[Proof of Proposition~\ref{prop:the_local_model_is_a_local_model}]
Set $\Psi = F \circ (G^{-1})$ with the maps $F$ and $G$ of Lemmas~\ref{lem:good_tubular_neighborhood} and \ref{lem:can_make_that_other_lagrangian_look_straight}.
\end{proof}

\section{Discussion.}
In 1911, Toeplitz posed the Square Peg Problem, which asks whether every continuous Jordan curve in the Euclidean plane {\em inscribes} (contains the vertices of) a square \cite{toeplitz1911}.
It remains open to this day.
The Rectangular Peg Problem (for smooth Jordan curves) grew out of it \cite[Conjecture 8]{matschke2014}.
Our solution fits into a long line of attack on these problems which involves identifying the inscribed feature with the (self-)intersection of an associated geometric-topological object.
The arguments tend to be quite short, once the appropriate outlook and auxiliary result is identified.

In 1913, Emch solved the Square Peg Problem for smooth convex curves \cite{emch1913}; and in 1929, Schnirelman solved it for smooth Jordan curves  \cite{schnirelman1929}.
In fact, both required weaker hypotheses than smoothness.
They laid the groundwork for later approaches, introducing the idea of configuration spaces and arguments involving homology and bordism.

In 1981, Vaughan gave a simple proof of the result that every continuous Jordan curve $\gamma$ inscribes a rectangle~\cite{meyerson1981}.  
Vaughan's argument was to define a continuous map $v : \Sym^2(\gamma) \to \C \times \bR_{\ge 0}$ by sending an unordered pair of points on $\gamma$ to the ordered pair consisting of their midpoint and the length of the line segment they span.
The points of self-intersection of $v$ thus parametrize inscribed rectangles in $\gamma$.
By filling $\gamma \times \{0\}$ with a disk $D \subset \C \times \{0\}$, we extend $v$ to a continuous map from $\bR {\mathbb P}^2$ to $\C \times \bR_{\ge 0} \subset \bR^3$ with the same set of self-intersections as $v$.
Such a map contains a point of self-intersection (in fact, a triple point), which corresponds to an inscribed rectangle in $\gamma$.
The fact that $v$ contains so much self-intersection indicates that a large family of inscribed rectangles should exist in $\gamma$, but extracting more information is a challenge.

In 1991, Griffiths claimed a solution of the smooth Rectangular Peg Problem based on elementary intersection theory, in the spirit of Schnirelman's work \cite{griffiths1991}.
However, in 2008, Matschke identified an irreparable error in its proof, casting doubt on the efficacy of this approach \cite{matschke2014}.
Following the discovery of this error, the status of the Rectangular Peg Problem reverted to the cases already reported.

In 2018, Hugelmeyer salvaged some new cases of the smooth Rectangular Peg Problem \cite{hugelmeyer2018}.
He did so by resolving Vaughan's map into a four-dimensional version that enables the detection of rectangles' \emph{aspect angles} (the angle between the two diagonals).
Define a map $h_n : \Sym^2(\gamma) \to \C \times \C$ by sending each unordered pair of points on $\gamma$ to their midpoint and the $(2n)$-th power of their difference.
For $n \ge 2$, the points of self-intersection of $h_n$ parametrize inscribed rectangles in $\gamma$ of aspect angle equal to an integer multiple of $\pi / n$.
Hugelmeyer showed how to identify $\im(h_n)$ with the image of a surface mapped into the $4$-ball with boundary on a $(2n,2n-1)$ torus knot in the $3$-sphere.
However, for $n \ge 3$, this knot does not bound a smoothly embedded M\"obius band in the $4$-ball: this is a result of Batson proven using Heegaard Floer homology \cite{batson}.
Hence $h_n$ contains a point of self-intersection for $n \ge 3$ when $\gamma$ is smooth.
In particular, taking $n=3$ leads to the novel case of the smooth Rectangular Peg Problem for a rectangle of aspect angle $\pi/3$.

In 2019, Hugelmeyer sharpened this approach and recovered $1/3$ of the smooth Rectangular Peg Problem \cite{hugelmeyer2019}.
More precisely, he showed that for any smooth Jordan curve $\gamma$, the set of values $\phi \in (0,\pi/2]$ for which $\gamma$ contains an inscribed rectangle of aspect angle $\phi$ has Lebesgue measure at least $\pi/6$.
The map $h_1$ above is a smooth embedding when $\gamma$ is a smooth Jordan curve, giving rise to a smooth M\"obius band $\im(h_1) = M \subset \C \times \C$.
The inscribed rectangles in $\gamma$ of aspect angle $\phi$ are parametrized by interior points of intersection between $M$ and $R_{2\phi}(M)$.
Hugelmeyer argued that this intersection is non-empty for $\ge 1/3$ of the angles $\phi$ by first introducing a novel ordering on a set of embedded M\"obius bands in $\C \times \bR_{\ge 0} \times S^1$ based on how they link and then applying a result from additive combinatorics.
In fact, this ordering may be applied to recover his earlier result, as well as the case of a square.

The inspiration behind our solution was to recast the problem within the framework of symplectic geometry, which offers greater rigidity for controlling intersections.
Following Hugelmeyer's second approach, we wished to endow $\C \times \C$ with a symplectic form with respect to which $M$ is Lagrangian and $R_{2\phi}$ is a Hamiltonian symplectomorphism.
Then an optimistic version of the Arnold-Givental conjecture predicts that $M$ and $M_\phi = R_{2\phi}(M)$ should contain at least $\dim H_*(M;\bZ/2\bZ) = 2$ points of intersection in their interiors.
Ultimately, we were able to arrange the framework by adjusting the map $h_1$ into the form $g \circ l$ given in the proof of the theorem.
We were able to circumvent proving the required version of the Arnold-Givental conjecture by noting that $M \cup M_\phi$ is a Lagrangian Klein bottle away from the common boundary of $M$ and $M_\phi$.
By smoothing it and appealing to Shevchishin's theorem, we obtained an intersection point that corresponds with the desired inscribed rectangle in $\gamma$ of aspect angle $\phi$.

Enjoyable accounts of the history of these problems and their relatives appear in \cite{kleewagon,matschke2014,pak}.
Additional notable progress appears in the work of Feller and Golla, Schwartz, and Tao \cite{fellergolla,schwartz,tao}.

\subsection*{Acknowledgements.}
We thank Peter Feller and Patrick Orson for stimulating discussions on a subtropical island at the outset of this work.
We thank Yasha Eliashberg, Joe Johns, and Leonid Polterovich for reassurances about Lagrangian smoothing, and Leonid in particular for steering us to the references \cite{makwu,pozniak}.

\bibliographystyle{amsplain}
\bibliography{references/works-cited.bib}
\end{document}